\theoremstyle{plain}
\newtheorem{theorem}{Theorem}
\newtheorem{thm}[theorem]{Theorem}
\newtheorem{lemma}[theorem]{Lemma}
\theoremstyle{remark}
\newtheorem{remark}[theorem]{Remark}
\newtheorem{example}[theorem]{Example}
\theoremstyle{definition}
\newcommand{\p}{\partial}
\newcommand{\pb}{\bar\partial}
\newcommand{\N}{{\mathbb{N}}}
\newcommand{\Z}{{\mathbb{Z}}}
\newcommand{\R}{{\mathbb{R}}}
\newcommand{\T}{{\mathbb{T}}}
\newcommand{\C}{{\mathbb{C}}}
\newcommand{\Q}{{\mathbb{Q}}}
\newcommand{\Aut}{{\rm Aut}}
\newcommand{\ind}{{\rm ind}}
\newcommand{\im}{{\rm im }}        
\newcommand{\CZ}{{\rm CZ}}
\newcommand{\JJ}{\mathcal{J}}
\newcommand{\MM}{\mathcal{M}}
\renewcommand{\AA}{\mathcal{A}}
\newcommand{\comment}[1]{}
\newcommand{\lin}{{\mathrm{lin}}}
\newcommand{\pa}{\partial}
\newcommand{\dist}{\operatorname{dist}}
\newcommand{\krn}{\operatorname{ker}}
\newcommand{\Spa}{\operatorname{Span}}
\title{A note on Reeb dynamics on the tight 3-sphere}
\author{F.~Bourgeois, K.~Cieliebak, and T.~Ekholm}
\date{5 July 2007}
\begin{document}
\maketitle
\abstract{We show that a nondegenerate tight contact form on the $3$-sphere has
  exactly two simple closed Reeb orbits if and only if the
  differential in linearized contact homology
  vanishes. Moreover, in this case
  the Floquet multipliers and Conley-Zehnder indices of the two
  Reeb orbits agree with those of a suitable irrational ellipsoid in
  $4$-space.}
\parindent=0pt
\parskip=4pt

\section{Introduction}\label{sec:intro}

A {\em contact form} on a closed $3$-manifold $Y$ is a $1$-form
$\lambda$ such that $\lambda\wedge d\lambda$ is a volume form on
$Y$. The {\em contact structure} determined by a contact form
$\lambda$ is the tangent hyperplane field   $\krn(\lambda)\subset
TY$. The condition on $\lambda\wedge d\lambda$ guarantees that the
contact structure is a completely non-integrable plane field. The {\em
  Reeb vector field} determined by the contact form $\lambda$ is the
vector field $R_\lambda$ on $Y$ uniquely determined by the conditions
$\lambda(R_\lambda)=1$ and $\iota_{R_\lambda}d\lambda=0$. A periodic
solution $\gamma$ of the differential equation determined by $R_\lambda$ is
called a {\em (closed) Reeb orbit}.

An {\em overtwisted disk} in a contact $3$-manifold $Y$ is an embedded
$2$-disk $D\subset Y$ such that the foliation of $D$ induced by the
contact structure has exactly one singular point and such that the
boundary $\pa D$ of $D$ is a closed leaf in this foliation. A contact
$3$-manifold which does not contain any overtwisted disk is called {\em
  tight}.

For a star-shaped (with respect to the origin) hypersurface
$Y\subset\R^4\cong\C^2$ the canonical 1-form
$\lambda=\frac{1}{2}\sum_{j=1}^2(x_jdy_j-y_jdx_j)$, where
$(z_1,z_2)=(x_1+iy_1,x_2+iy_2)$ are linear coordinates on $\C^{2}$,
restricts to a
contact form on $Y$. This contact form is tight, and by Eliashberg's
uniqueness theorem~\cite{El} each tight contact form on the
three-sphere $S^3$ arises in this way. Particularly simple
hypersurfaces of this form are the {\em irrational ellipsoids}
$$
E(a_1,a_2)=\left\{(z_1,z_2)\in\C^2\left|\,
\frac{|z_1|^2}{a_1}+\frac{|z_2|^2}{a_2}=1\right.\right\}
$$
for $a_1,a_2>0$ and $a_1/a_2\notin \Q$.

In~\cite{EGH} Eliashberg, Givental, and Hofer introduced Symplectic Field
Theory (SFT). It is a framework for extracting invariants of contact
and symplectic manifolds via holomorphic curve counts.

For the tight 3-sphere there is a particularly simple SFT type
invariant called ``linearized contact homology''. For the sake of
completeness, we will recall its definition and a proof of its invariance
in Section~\ref{sec:SFT}. Roughly speaking, linearized contact homology is defined as
follows. Let $\lambda$ be a contact form on $S^3$ such that all closed
Reeb orbits are {\em nondegenerate}, i.e.~no {\em Floquet multiplier}
(eigenvalue of the linearized return map on a transverse section)
equals $1$. Then each
closed Reeb orbit has a well-defined {\em Conley-Zehnder index}
$\CZ(\gamma)\in\Z$, see~\cite{HWZ}. Following~\cite{HWZ}, we call a
contact form $\lambda$ {\em dynamically convex} if all closed Reeb
orbits are nondegenerate and have Conley-Zehnder index at least
$3$. E.g.~this is the case for the induced contact forms on irrational ellipsoids,
and more generally for the contact forms on hypersurfaces in $\R^4$ bounding strictly convex
domains~\cite{HWZ}. For a dynamically convex contact form $\lambda$
let $CC_*(S^3,\lambda)$ be the $\Q$-vector space generated by the ``good''
(see Section~\ref{sec:SFT}) closed Reeb orbits, graded by their {\em degree}
$|\gamma|=\CZ(\gamma)-1$. Fixing an $\R$-invariant almost
complex structure $J$ on $\R\times S^3$ compatible with $\lambda$, we
define a differential $\p:CC_*(S^3,\lambda)\to CC_{*-1}(S^3,\lambda)$
by counting rigid $J$-holomorphic cylinders connecting Reeb
orbits. We show in Section~\ref{sec:SFT} that $\p^2=0$, that the {\em
linearized contact homology} $HC^\lin(S^3)=\ker\p/\im\p$ is
independent of the choice of $J$ and of $\lambda$, and furthermore that
$$
   HC_k^\lin(S^3) = \begin{cases}
   \Q & \text{for $k\geq 2$ even,} \cr
   0 & \text{otherwise}.
   \end{cases}
$$
In this note we prove the following result.

\begin{thm}\label{thm:two}
Let $\lambda$ be a dynamically convex tight contact form on the
3-sphere. Then the following are equivalent:
\begin{itemize}
\item[{\rm (i)}] The differential in linearized contact homology vanishes.
\item[{\rm (ii)}] There are precisely two simple Reeb orbits
 $\gamma_1,\gamma_2$.
\end{itemize}
Moreover, in this case $\gamma_1$ and $\gamma_2$ are unknotted, elliptic,
have linking number $1$, and their Conley-Zehnder indices and Floquet
multipliers agree with those of a suitable irrational ellipsoid.
\end{thm}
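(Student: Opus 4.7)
The plan is to combine the structure theorem of Hofer-Wysocki-Zehnder (HWZ) for dynamically convex contact forms on the tight 3-sphere with Franks' theorem on area-preserving disk maps and Beatty's theorem on complementary sequences. HWZ produces an unknotted elliptic Reeb orbit $\gamma_0$ with $\CZ=3$ bounding a disk-like global surface of section, whose area-preserving first return map $\phi$ governs all other orbits. Combined with Franks' theorem, this yields the dichotomy that $\lambda$ has either exactly two or infinitely many simple Reeb orbits.

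For the direction (ii) $\Rightarrow$ (i) together with the moreover statement, I would identify $\gamma_1=\gamma_0$, so $\gamma_1$ is unknotted and elliptic with irrational rotation number $\theta_1>1$. The second orbit $\gamma_2$ then corresponds to the unique fixed point of $\phi$ (unique by Franks, given only two simple orbits) and has linking number $1$ with $\gamma_1$ since it meets the surface of section exactly once. I would rule out $\gamma_2$ being negative hyperbolic via Beatty's theorem: the complement in the positive even integers of the Beatty sequence $\{2\lfloor k\theta_1\rfloor\}_{k\geq 1}$ is itself a Beatty sequence, whereas the good iterate degrees of a negative hyperbolic orbit form an arithmetic progression. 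Hence $\gamma_2$ is elliptic with irrational $\theta_2>1$. Every good orbit then has odd Conley-Zehnder index, so $CC_*(S^3,\lambda)$ is concentrated in even degrees and $\partial$ vanishes by parity, giving (i). The partition $\{2\lfloor k\theta_1\rfloor\}_{k\geq 1}\sqcup\{2\lfloor k\theta_2\rfloor\}_{k\geq 1}=\{2,4,6,\dots\}$ is equivalent to $1/\theta_1+1/\theta_2=1$ by Beatty's theorem, which matches the rotation numbers on a suitable irrational ellipsoid.

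For the direction (i) $\Rightarrow$ (ii), the hypothesis $\partial=0$ forces $CC_k\cong HC_k^\lin$, so there is exactly one good orbit in each even degree $k\geq 2$ and none in odd degrees. This rules out positive hyperbolic orbits. The HWZ-Franks dichotomy reduces the task to excluding infinitely many simple orbits; I would show this is incompatible with the rigid constraint of one good orbit per even degree by a complementary-sequence argument: the complement in $\Z_{>0}$ of the Beatty sequence coming from the HWZ binding orbit $\gamma_1$ is itself a single Beatty sequence, so it cannot be partitioned further into Beatty sequences and arithmetic progressions arising from iterates of further elliptic or negative hyperbolic orbits without either forcing rationality of some rotation number or producing an overlap.

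The main obstacle is this final exclusion of infinitely many simple orbits in (i) $\Rightarrow$ (ii), which amounts to a Fraenkel-style rigidity for decompositions of $\Z_{>0}$ into Beatty sequences and arithmetic progressions. The other steps — the parity vanishing of $\partial$, the identification of the binding orbit via HWZ, and the Beatty partition condition forcing the ellipsoid relation — are relatively direct once the orbit types and the dichotomy are in hand.
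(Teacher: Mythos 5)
Your outline correctly identifies the overall structure (degree bookkeeping via Conley--Zehnder index formulas, plus number theory of Beatty-type sequences), but there is a genuine gap in the direction (i)~$\Rightarrow$~(ii), and this is precisely the hard part of the theorem. You acknowledge this yourself when you call the exclusion of infinitely many simple orbits ``the main obstacle'' and describe it as ``a Fraenkel-style rigidity for decompositions of $\Z_{>0}$ into Beatty sequences and arithmetic progressions.'' As stated, this rigidity does not hold: knowing only that each even degree $\geq 2$ is hit by at least one good orbit does not prevent the degree set from being filled by three or more interleaving elliptic orbits. The assertion that ``the complement in $\Z_{>0}$ of the Beatty sequence coming from $\gamma_1$ is itself a single Beatty sequence, so it cannot be partitioned further'' is not a theorem---whether a Beatty sequence can be split into further Beatty sequences is exactly the delicate territory of Fraenkel's conjecture, and in any case the degree sequences here are shifted and scaled Beatty sequences, not Beatty sequences of $\Z_{>0}$. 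The paper does not attempt a purely degree-theoretic partition argument. The decisive extra input is a result of Bourgeois--Oancea: the degree-raising isomorphisms $HC^\lin_{2k}\cong HC^\lin_{2k+2}$ are induced by chain maps counting holomorphic curves in the symplectization, hence are \emph{action-increasing}. This forces condition (O) in the paper: the ordering of all closed orbits by degree coincides with the ordering by action. With (O) in hand, the combinatorics becomes the statement (J) that the jump sequence of each $\AA_1/\AA_j$ is a subsequence of that of $\alpha_1$, and Lemma~\ref{lem:jump}(b) (proved via the torus-dynamics Lemma~\ref{l:torus}, a pigeonhole/density argument on $\T^2$) produces a common jump and hence two distinct iterates of the same degree, contradicting $\partial=0$. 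This action/degree interplay is what makes the problem tractable, and it is entirely absent from your proposal.

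Two further issues in (ii)~$\Rightarrow$~(i). First, your Beatty argument to rule out a hyperbolic $\gamma_2$ presupposes that the iterate degrees of $\gamma_1$ and $\gamma_2$ \emph{partition} the even positive integers---but that is equivalent to exactly one generator per even degree, which is essentially the conclusion $\partial=0$ you are trying to reach, so the argument as stated is circular. Second, for the parity vanishing you need to exclude the hyperbolic orbits whose iterates give \emph{even} Conley--Zehnder index (odd degree), i.e.~positive real Floquet multiplier (``even hyperbolic'' in the paper's terminology); you instead discuss excluding negative-eigenvalue (``odd hyperbolic'') orbits, whose good iterates already have even degree and are harmless for parity. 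The paper's route is different and cleaner: it proves the contrapositive $\neg\text{(i)}\Rightarrow\neg\text{(ii)}$ directly---if $\partial\neq0$ there must be an even hyperbolic orbit and an elliptic or odd hyperbolic orbit, and the Conley--Zehnder iteration formula then shows a third orbit is forced. Finally, the HWZ/Franks dichotomy you invoke, while true, is not used in the paper's proof (it appears only in a remark after the theorem); the paper's argument is self-contained modulo the input from~\cite{BO} and~\cite{HWZ}.
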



\begin{remark}\label{r:sp=gen}
The abstract perturbation theory developed in~\cite{HWZ-poly} will
eventually lead to a definition and an invariance proof of linearized contact homology for arbitrary
(not necessarily dynamically convex) tight contact forms on $S^3$ with nondegenerate closed Reeb orbits,
cf.~Remark~\ref{rem:SFT} below. Using this in combination with formulas for Conley-Zehnder indices, see Section \ref{ssec:CZ}, it is immediate that condition (i) implies dynamical convexity and an additional argument, see Remark \ref{r:dynconv}, shows that (ii) does as well. Thus, the restriction in Theorem \ref{thm:two} to the technically simpler dynamically convex case turns out not to be any restriction at all.
\end{remark}

\begin{remark}
The implication that if there are precisely two simple closed Reeb
orbits $\gamma_1,\gamma_2$ then their Floquet multipliers lie on the
unit circle with irrational angles has been proved independently
in~\cite{HLW}.
\end{remark}

\begin{remark}
It follows from~\cite{HWZ} that in the situation of
Theorem~\ref{thm:two} the closed Reeb orbit of degree 2
is the binding of an open book decomposition with pages whose
interiors are transverse to the Reeb vector field. The return map of a
page is an area preserving diffeomorphism of the open disk with one
fixed point and no other periodic points. However, the return map need
not be conjugate to an irrational rotation (see~\cite{FK}).
\end{remark}

\begin{remark}
In view of the result in~\cite{HWZ}, Theorem~\ref{thm:two}
implies the following dichotomy for the Reeb dynamics on a
strictly convex hypersurface in $\R^4$. Either the differential in
linearized contact homology vanishes and there are precisely two
simple closed orbits; or the differential does not vanish and
there are infinitely many simple closed orbits. Moreover,
generically the second case occurs (even in the class of star-shaped
hypersurfaces).

This discussion motivates the following conjecture. {\em If, for a
  star-shaped hypersurface in $\R^4$, the differential in linearized
  contact homology does not vanish, then it carries infinitely many
  closed characteristics.} More optimistically, one could even
  conjecture that a star-shaped hypersurface in $\R^{2n}$ carries
  either precisely $n$ (if the differential in linearized contact
  homology vanishes) or infinitely many (if it does not vanish) simple
  closed Reeb orbits. See~\cite{Lo} for an exposition of known
  multiplicity results.
\end{remark}

{\bf Acknowledgements. }
FB was partially supported by the Fonds National de la Recherche
Scientifique, Belgium.

KC was partially supported by DFG grant CI 45/2-2.

TE acknowledges support from the Royal Swedish Academy of Sciences, Research
Fellow sponsored by the Knut and Alice Wallenberg foundation, from the
Alfred P. Sloan Foundation, Research  Fellow, and from NSF-grant
DMS-0505076.

We thank E.~Volkov for helpful comments, and
D.~Kotschick for pointing out Remark~\ref{rem:jump}.

\section{Linearized contact homology}\label{sec:SFT}

In this section we define the linearized contact homology for a
dynamically convex contact form on $S^3$ and prove its
invariance. See~\cite{EGH} for details of the setup.

Fix a dynamically convex contact form $\lambda$ on $S^3$. If $\gamma$
is a simple closed orbit of the Reeb field $R_\lambda$,
we denote by $\gamma^k$ its $k$-th iterate. We call $\gamma^k$
{\em good} if the Conley-Zehnder indices (see Section~\ref{sec:proof})
of $\gamma$ and $\gamma^k$ have the same parity. Otherwise, we call
$\gamma^k$ {\em bad}.
Let $CC_*(S^3,\lambda)$ be the $\Q$-vector space generated by the good
closed Reeb orbits, graded by their degree $|\gamma|=\CZ(\gamma)-1$.

An $\R$-invariant almost complex structure $J$ on $\R\times S^3$ is
called {\em compatible with $\lambda$} if it preserves
$\xi=\ker\lambda$, if $d\lambda(\cdot,J\cdot)$ defines a metric on $\xi$,
and if $J \frac{\partial}{\partial t} = R_\lambda$, where $t$ denotes the
coordinate on $\R$. Fix such a $J$.
For good closed orbits $\gamma, \gamma_1, \ldots, \gamma_r$ of
$R_\lambda$ of periods $T, T_1, \ldots, T_r$,
let $\mathcal{M}(\gamma;\gamma_1, \ldots, \gamma_r)$ be the moduli
space consisting of equivalence classes of tuples
$(x,p;y_1,p_1\dots,y_r,p_r;f)$, where $x,y_1,\dots,y_r$ are distinct
points on $S^2$ with directions $p,p_1,\dots,p_r$ and $f =(a,u) : S^2
\setminus \{ x, y_1, \ldots, y_r \} \to \R \times S^3$ is a map with
the following properties: 
\begin{itemize}
\item $df+ J\circ df \circ j = 0$,
\item $\lim_{z \to x} a(z) = +\infty$ and $\lim_{z \to y_i} a(z) = -\infty$,
\item in polar coordinates $(\rho, \theta) \in ]0,1] \times \R/\Z$
around $x$ in which $p$ corresponds to $\theta=0$ we have $\lim_{\rho\to
  0} u(\rho, \theta) = \gamma(-T \theta)$, and 
\item in polar coordinates $(\rho_i, \theta_i)
\in ]0,1] \times \R/\Z$ around $y_i$ in which $p_i$ corresponds to
$\theta_i=0$ we have $\lim_{\rho_i\to 0} u(\rho_i, \theta_i) =
\gamma_i(T_i \theta_i)$.
\end{itemize}
Two such tuples $(x,p;y_1,p_1\dots,y_r,p_r;f)$ and
$(x',p';y_1',p_1'\dots,y_r',p_r';f')$ are equivalent
if there exists a biholomorphism $h$ of $S^2$ such that $h(x) = x'$,
$h(y_i) = y'_i$, $d_xh\cdot p=p'$, $d_{y_i}h\cdot p_i=p_i'$ and $f =
f' \circ h$. Since $J$ is $\R$-invariant, 
$\R$ acts on these moduli spaces by translation and we denote the quotient by
$\mathcal{M}(\gamma;\gamma_1, \ldots, \gamma_r)/\R$. Using an
appropriate functional analytic setup, the moduli spaces can be
described as the zero locus of a Fredholm section of a certain bundle
and have expected dimension (determined by the Fredholm index) given by
$$
   \dim\bigl(\mathcal{M}(\gamma;\gamma_1, \ldots, \gamma_r)/\R\bigr) =
   |\gamma|-\sum_{j=1}^r|\gamma_j|\,\,-\,1.
$$
If this dimension is zero and if the moduli space is compact and cut out
transversally, then it consists of finitely many points. One can
associate a sign to each of these points via coherent orientations on
the moduli spaces~\cite{BM} and we denote by $n(\gamma;\gamma_1, \ldots,
\gamma_r)$ the algebraic count of the elements in
$\mathcal{M}(\gamma;\gamma_1, \ldots, \gamma_r)/\R$. Furthermore, we
denote by $\kappa_\gamma$ the multiplicity of the Reeb orbit
$\gamma$. Define the linear map 
$$
   \p:CC_*(S^3,\lambda)\to CC_{*-1}(S^3,\lambda),\qquad
   \gamma\mapsto \sum_{|\gamma'|=|\gamma|-1}
   \frac{n(\gamma;\gamma')}{\kappa_{\gamma'}}\gamma'. 
$$
Thus $\p$ counts rigid $J$-holomorphic cylinders interpolating between
closed Reeb orbits $\gamma$ and $\gamma'$.

\begin{thm}\label{thm:SFT}
Let $\lambda$ be a dynamically convex contact form on $S^3$.
Then for a generic $S^1$-dependent compatible almost complex structure
$J$ the map $\p$ is well-defined and satisfies $\p^2=0$. Moreover, the
{\em linearized contact homology} $HC^\lin(S^3)=\ker\p/\im\p$ is
independent of the choice of $J$ and $\lambda$, and is given by
$$
   HC_k^\lin(S^3) = \begin{cases}
   \Q & \text{for $k\geq 2$ even,} \cr
   0 & \text{otherwise}.
   \end{cases}
$$
\end{thm}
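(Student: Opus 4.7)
The proof breaks into three parts: (a) well-definedness of $\p$ and $\p^2=0$; (b) invariance of $HC^{\lin}$ under changes of $J$ and $\lambda$; and (c) explicit computation via an irrational ellipsoid. The hardest step will be (b).

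For (a), transversality of the cylinder moduli spaces $\MM(\gamma;\gamma')/\R$ follows from standard SFT transversality for a generic $S^1$-dependent compatible $J$: the $S^1$-parameter breaks the reparametrization symmetries that otherwise obstruct transversality for multiple covers, and restricting to good orbits kills the residual bad-cover contributions. The zero-dimensional strata are compact by SFT compactness. For $\p^2=0$, I analyze the boundary of the one-dimensional piece of $\MM(\gamma;\gamma')/\R$ with $|\gamma|-|\gamma'|=2$. SFT compactness allows three kinds of limit: broken cylinders $\gamma\to\gamma''\to\gamma'$ (contributing the matrix elements of $\p^2$), a cylinder with a rigid holomorphic plane bubbling off a negative puncture, and closed sphere bubbles. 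Exactness of $d\lambda$ on $\R\times S^3$ excludes sphere bubbles. A rigid plane asymptotic to $\gamma''$ would have dimension $|\gamma''|-1=0$, forcing $\CZ(\gamma'')=2$ and contradicting dynamical convexity. Thus only broken cylinders contribute and $\p^2=0$.

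For (b), any two dynamically convex contact forms on $(S^3,\xi_{\rm std})$ differ by a positive function and so are joined by a smooth path $\lambda_t$; the associated exact symplectic cobordism gives a chain map between the two linearized complexes by counting rigid cylinders in the cobordism, and a homotopy of cobordisms gives a chain homotopy. The same no-bubbling analysis as in (a) applies under dynamical convexity, so the argument goes through, and change of $J$ is treated analogously. The subtlety is that a generic path $\lambda_t$ may leave dynamical convexity at finitely many isolated birth--death times, where rigid planes could in principle appear; these are handled by the standard handle-slide technique (see~\cite{EGH,BM}), the key point being that on either side of such a time the bound $\CZ\geq 3$ rules out the dangerous configurations, so the chain map and homotopy are well-defined and match across the singular time. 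This is the main obstacle and the reason the paper restricts attention to dynamically convex forms.

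For (c), I compute on the irrational ellipsoid $E(a_1,a_2)$ with $a_1/a_2\notin\Q$, which has exactly two simple Reeb orbits $\gamma_1,\gamma_2$ in the coordinate planes and whose closed orbits are their iterates. A direct calculation of the linearized Reeb flow with respect to the obvious capping disks yields
\[
|\gamma_j^k|\;=\;2\bigl(k+\lfloor k\,a_j/a_{j'}\rfloor\bigr),\qquad j'=3-j,
\]
all positive even integers. Setting $\alpha=1+a_1/a_2$ and $\beta=1+a_2/a_1$, one finds $1/\alpha+1/\beta=1$, so Beatty's theorem asserts that $\{\lfloor n\alpha\rfloor\}_{n\geq 1}$ and $\{\lfloor n\beta\rfloor\}_{n\geq 1}$ partition $\N$; doubling, the set $\{|\gamma_j^k|\}$ is exactly $\{2,4,6,\dots\}$ with each value occurring once. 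Therefore $CC_k\cong\Q$ for even $k\geq 2$ and vanishes otherwise; since $\p$ lowers degree by $1$ and the chain complex is concentrated in even degrees, $\p=0$ and $HC_k^{\lin}(S^3)=CC_k$ as claimed.
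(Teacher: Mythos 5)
Your high-level outline matches the paper's, and your computation in part (c) is correct --- the Beatty theorem argument is a nice concrete way to see that the degrees $|\gamma_1^k|$, $|\gamma_2^k|$ enumerate the positive even integers without repetition, something the paper only alludes to. But parts (a) and (b) have real gaps.

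In part (a), your catalogue of degenerations (broken cylinders, a plane bubbling off, sphere bubbles) is too coarse. SFT compactness produces a \emph{broken holomorphic sphere modelled on a tree}: any number of levels, components with several negative punctures, and possibly multiply-covered components. The paper controls this by splitting the tree into a \emph{stem} (the linear subtree joining the two free punctures) and \emph{branches}, and then runs an index count: each branch has total index $\geq 2$ by dynamical convexity, each non-exceptional good stem component has index $\geq 0$ (or $\geq 1$ in a symplectization level by $\R$-invariance), and --- crucially --- branched covers of orbit cylinders, which are \emph{not} excluded by transversality, have index $\geq 0$ by Lemma~\ref{lem:ind} (subadditivity of $|\gamma^k|$). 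Your argument says nothing about multi-level buildings, components with several negative ends, or branched covers of trivial cylinders; the claim that ``restricting to good orbits kills the residual bad-cover contributions'' conflates the good/bad distinction (an orientation phenomenon) with the multiple-cover transversality problem, which is handled by the index inequality, not by discarding bad orbits. Your description of the $S^1$-dependent $J$ is also too vague: the paper's Step 2 makes the domain-dependence precise via an $S^1$-valued cross-ratio $w(z_0,p_0,z_1,z)$ and achieves transversality only for curves that are not branched covers of orbit cylinders.

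In part (b), your ``handle-slide'' worry about a path of contact forms leaving dynamical convexity is a misreading of how the invariance argument is structured. The paper never considers a path $\lambda_\tau$ of contact forms: it fixes two dynamically convex forms $\lambda_\pm$, builds an exact symplectic cobordism $(X,d\lambda)$ whose ends are cylindrical over $\lambda_\pm$, and homotopes the almost complex structure $J_\tau$ on this fixed cobordism. Since the cobordism is cylindrical only near its ends, SFT compactness produces breaking \emph{only at closed Reeb orbits of $\lambda_+$ or $\lambda_-$}, both of which are dynamically convex by hypothesis. No intermediate contact forms, and hence no birth--death of orbits or loss of dynamical convexity, enter at all. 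Presenting this as ``the main obstacle and the reason the paper restricts to dynamically convex forms'' misidentifies the actual role of the hypothesis, which is to make the branch index bound $\ind(B_i)\geq 2$ hold.
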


\begin{proof}
The proof follows the familiar scheme from Floer homology (see
e.g.~\cite{Sa, EGH}), provided we can prove that 1-dimensional moduli
spaces of holomorphic cylinders are regular and compact up to breaking
into pairs of cylinders. As usual, one needs to prove this in three
cases:
\begin{itemize}
\item in a symplectization (to establish \cite[Proposition 1.9.1]{EGH}),
\item in a cobordism (to establish \cite[Proposition 1.9.3]{EGH}), and
\item for a homotopy of almost complex structures on a cobordism (to
  establish \cite[Proposition 1.9.4]{EGH}).
\end{itemize}
We will explain the argument in the case of a homotopy, the other two
cases being analogous but easier. Once invariance of $HC^\lin_k(S^3)$
is established it can be easily computed for an irrational ellipsoid
using the formulae for the Conley-Zehnder indices in
Section~\ref{sec:proof} ($\p=0$ in this case).

\smallskip
{\bf Step 1:} Let $Y=S^3$. Consider $X=\R\times Y$ with an exact symplectic form
$d\lambda$ which coincides near $\{\pm\infty\}\times Y$ with
$d(e^t\lambda_\pm)$ for dynamically convex contact forms $\lambda_\pm$
on $Y$. Let $(J_\tau)_{\tau\in[0,1]}$ be a homotopy of almost complex
structures on $\R\times Y$ such that for every $\tau$, $J_{\tau}$ is compatible with
$d\lambda$ and coincides near $\pm\infty$ with fixed $\R$-invariant
almost complex structures $J_\pm$ compatible with $\lambda_\pm$. Fix
closed Reeb orbits $\gamma_\pm$ for $\lambda_\pm$ of equal degrees
$|\gamma_+|=|\gamma_-|$ and consider the 1-dimensional moduli space
$$
   \MM_{[0,1]}=\cup_{\tau\in[0,1]}\MM(\gamma_+;\gamma_-;J_\tau).
$$
By the SFT compactness theorem~\cite{BEHWZ,CM-comp}, every sequence
$f_k$, $k=1,2,\dots$ in $\MM_{[0,1]}$ has a subsequence
converging as $k\to\infty$ to a {\em broken holomorphic sphere}
$F=\{F_\alpha\}_{\alpha\in T}$. Here $T$ is a directed tree (i.e.~each
edge is directed) with the
following properties. For each vertex $\alpha\in T$, $F_\alpha$ is a
punctured holomorphic sphere with exactly one positive puncture and
any number of negative punctures in
$(\R\times Y,J_+)$, in $(\R\times Y,J_-)$, or in $(X,J_\tau)$ for some
$\tau\in[0,1]$. Each edge $e$ of $T$ is labeled by a closed Reeb orbit
$\gamma_e$ of $\lambda_\pm$. If $e$ is directed from a vertex
$\alpha$ to a vertex $\beta$, then $\gamma_e$ is the asymptotic Reeb
orbit at the unique positive puncture of $F_\alpha$ and at one negative
puncture of $F_\beta$. Conversely, each puncture on any $F_\alpha$
corresponds to a unique edge in this way except for two {\em free
punctures}, a positive one asymptotic to $\gamma_+$ and a negative one
asymptotic to $\gamma_-$.

Since the Fredholm index is additive under joining spheres at Reeb
orbits and since the only two free (not paired across edges) punctures
of $F$ are asymptotic to $\gamma_\pm$, the expected dimensions
(i.e. the Fredholm indices) $\ind(F_\alpha)$ of the moduli spaces of
$J_\pm$-, or
$J_\tau$-holomorphic spheres ($\tau$ fixed) which contain the $F_\alpha$ satisfy
$$
   \sum_{\alpha\in T}\ind(F_\alpha) = |\gamma_+|-|\gamma_-| = 0.
$$

The structure of the tree $T$ can be described as follows. Let
$\alpha_\pm\in T$ be the
vertices such that $F_{\alpha_\pm}$ contains the free punctures asymptotic to
$\gamma_\pm$. Define the {\em stem} $S$ of the tree $T$ to be the unique linear
(i.e.~at most two edges meet at each vertex) subtree $S\subset T$
connecting $\alpha_+$ and $\alpha_-$. Define the {\em branches}
$B_1,\dots,B_k$ to be the connected components of $T-S$. Since each
branch $B_i$ has precisely one free  puncture which is
positive and asymptotic to a Reeb orbit $\gamma_i$, its total index
$\ind(B_i)=\sum_{\alpha\in B_i}\ind(F_\alpha)$ satisfies
$$
   \ind(B_i) = |\gamma_i| \geq 2
$$
by dynamical convexity.

The {\em orbit cylinder} over a closed Reeb orbit $\gamma$ for
$\lambda_\pm$ is the $J_\pm$-holomorphic cylinder
$\R\times\gamma\subset\R\times Y$. We call a punctured holomorphic
sphere {\em good} if it is not a branched cover of an orbit cylinder.
Now let us assume that the following regularity condition holds.

\begin{itemize}
\item[(R)] All good components $F_\alpha$ with $\alpha\in S$ in the stem are
{\em regular},  i.e.~transversely cut out by the $1$-parameter family
of Cauchy-Riemann
operators corresponding to the homotopy $(J_\tau)_{\tau\in[0,1]}$.
\end{itemize}

We conclude that each good $F_\alpha$ in $(\R\times Y,J_\pm)$ is
transversely cut out and thus satisfies $\ind(F_\alpha)\ge 1$ because
of translation invariance (see Step 2 below for a full explanation of
how to reach this conclusion). Moreover, there is a finite collection of
exceptional $\tau$-values $0<\tau_1<\dots<\tau_m<1$ such that the
following hold. For every $\tau\ne \tau_j$, $j=1,\dots,m$, each good
$F_\alpha$ in $(X,J_\tau)$ and in the stem belongs to a moduli space
which is transversely cut out by the $\bar\pa_{J_\tau}$-equation and
hence $\ind(F_\alpha)\ge 0$. For $\tau=\tau_j$ some
$j\in\{1,\dots,m\}$, there exists a unique exceptional good punctured
holomorphic sphere
$C_\tau\in\MM(\beta_0;\beta_1,\dots,\beta_r;J_\tau)$, which may belong
to the stem, with the property that the linearized
$\bar\pa_{J_\tau}$-operator at $C_\tau$ has $1$-dimensional cokernel;
at all other good spheres $F_\alpha\ne C_\tau$ in the stem, the linearized
$\bar\pa_{J_\tau}$-operator is surjective. Since the cokernel has
dimension $1$, it follows that $\ind(C_\tau)\ge -1$.

By Lemma~\ref{lem:ind} below, a branched cover $F_\alpha$ over an
orbit cylinder has index $\ind(F_\alpha)\geq 0$. So we see that
$\ind(F_\alpha)\geq 0$ for each component $F_\alpha$ in the stem with
$F_\alpha\neq C_\tau$.

Recall that each edge $e\in T$ was labeled with a Reeb orbit
$\gamma_e$. We order the edges by the actions $\AA(\gamma_e)$. For area
reasons, this order strictly increases in the direction of the tree
and therefore the special component $C_\tau$ can occur at most once
among the $F_\alpha$ with $\alpha\in S$. In particular, in view of the
preceding discussion the total index of the stem $S$ satisfies
$$
   \ind(S)\geq -1.
$$
Thus, since $\ind(T)=0$ and since $\ind(B_i)\ge 2$ for any branch
$B_i$, we conclude that there are no branches $B_i$, and hence $T=S$
is a linear tree. In particular, this excludes branched covers of
orbit cylinders and hence all components $F_\alpha$ are good.
Moreover,  there are only the following two possibilities for the
dimensions of the $F_\alpha$, $\alpha\in T$.

{\bf Case 1:} $\ind(F_\alpha)\geq 0$ for all $\alpha\in T$, in which case
$T$ has only one vertex $\alpha$, no breaking occurs, and the
sequence $(f_k)$ converges in $\MM_{[0,1]}$ to $F_\alpha\in\MM_{[0,1]}$.

{\bf Case 2:} $\ind(F_{\beta^\pm})=\pm 1$ for unique vertices
$\beta^\pm\in T$ and $\ind(F_\alpha)=0$ for all vertices $\alpha\in
T-\{\beta^+,\beta^-\}$. Since the linear tree $T$ contains only one
component in $(X,J_\tau)$ and components in $(\R\times Y,J_\pm)$ have
index at least $1$, we conclude that $\beta^\pm$ are the only vertices
in $T$. Hence the limit curve $F$ is a pair of holomorphic cylinders,
which is precisely what is needed for the chain homotopy
property. This concludes the proof modulo the assumption that
regularity condition (R) above holds.
\smallskip

{\bf Step 2:} We adapt the technique used in~\cite{CM-trans} to achieve the
regularity condition (R).

To three distinct points $z_0,z_1,z_2$ on the Riemann sphere
$S^2=\C\cup\{\infty\}$ and a tangent direction $p_0\in T_{z_0}S^{2}/\R_+$ at
$z_0$, where $\R_+$ acts on tangent vectors by scalar multiplication,
we associate an angle $w(z_0,p_0,z_1,z_2)\in S^1$ as follows.
Let $\phi\in\Aut(S^2)$ be the unique M\"obius transformation with
$\phi(z_0)=0$, $\phi(z_1)=1$, and $\phi(z_2)=\infty$, and define
$$
   w(z_0,p_0,z_1,z_2) = d_{z_0}\phi\cdot p_0\in T_0\C/\R_+\cong
   S^1.
$$
The map $w$ is clearly invariant under simultaneous action of
M\"obius transformations on $(z_0,p_0,z_1,z_2)$ and thus it induces a
diffeomorphism
$$
   w:\MM_3^\$\to S^1,
$$
where $\MM^{\$}_3$ is the decorated Deligne-Mumford space of 3 ordered
distinct points
$(z_0,z_1,z_2)$ on the Riemann sphere with a specified direction $p_0$
at $z_0$
(see~\cite{BEHWZ}). One easily sees that this map extends to arbitrary
trees of spheres for which $z_2$ lies between $z_0$ and $z_1$
(i.e.~$z_2$ lies on the unique embedded path connecting $z_0$ and
$z_1$).

Let $\JJ_{S^1}$ be the space of $S^1$-dependent almost complex
structures on $\R\times Y$ of the type considered above, i.e., which
are compatible with
$d\lambda$ and which coincides near $\pm\infty$ with fixed $\R$-invariant
almost complex structures $J_\pm$. Each $J\in\JJ_{S^1}$ induces a
Cauchy-Riemann operator acting on tuples $(z_0,p_0,z_1,f)$ consisting
of distinct points $z_0,z_1\in
S^2$, a direction $p_0$ at $z_0$, and a map
$f:S^2-\{z_0,z_1\}\to\R\times Y$ by
$$
   \pb_J(z_0,p_0,z_1,f)(z) =
   \frac{1}{2}\Bigl(df(z)+J\bigl(w(z_0,p_0,z_1,z)\bigr)\circ df(z)\circ
   i\Bigr),
$$
where $z\in S^2-\{z_0,z_1\}$. This operator is clearly invariant under the
simultaneous action of $\Aut(S^2)$ on $(z_0,p_0,z_1,f)$. For fixed
$(z_0,p_0,z_1)$, after applying a M\"obius transformation that sends
$z_0$ to $0$, $z_1$ to $\infty$ and $p_0$ to $\R_+$, we obtain a
Cauchy-Riemann operator on maps $f\colon\R\times(\R/2\pi)\to\R\times
Y$ where we think of the source as an infinite cylinder (using
$\C-\{0\}\cong\R\times(\R/2\pi)$). In polar coordinates
$(s,t)\in\R\times(\R/2\pi\Z)$ this operator is given by
$$
   \pb_J(f)(s,t)
   = \frac{1}{2}\Bigl(df(s,t)+J(e^{-it})\circ df(s,t)\circ i\Bigr),
$$
since $w(0,\R_+,\infty,e^{s+it})=e^{-it}$. A standard argument (see
e.g.~\cite{Sa}) shows that regularity for $1$-parameter families of
such Cauchy-Riemann operators can be achieved by choosing a generic
path of $S^1$-dependent $J$ in $\JJ_{S^1}$. More precisely, for a
Baire set of paths $(J_\tau)_{\tau\in[0,1]}\in \JJ_{S^{1}}$ the
following holds: at any pair $(f,\tau)$ such that
$\pb_{J_\tau}f=0$ and such that $f$ is not a branched cover
of an orbit cylinder, the linearization of
$\bar\pa_{J_\tau}$ at $(f,\tau)$ is surjective.

We use the regularity result for $S^{1}$-dependent almost complex
structures to establish condition (R) as follows. Fix a point
$p_{\bar\gamma}$ on each simple closed Reeb orbit $\bar\gamma$. For
closed Reeb orbits $\gamma_0,\gamma_1,\dots,\gamma_k$ with underlying
simple orbits $\bar\gamma_i$ and $J\in\JJ_{S^1}$ denote by
$\MM(\gamma_0;\gamma_1,\dots,\gamma_k;J)$ the moduli space of
equivalence classes of tuples $(z_0,p_0,z_1,\dots,z_k,f)$ of the
following form:
\begin{itemize}
\item $z_0,\dots,z_k$ are distinct points in $S^2$;
\item $p_0$ is a tangent direction at $z_0$;
\item $f:S^2-\{z_0,\dots,z_k\}\to\R\times Y$ is a map with
  $\pb_J(z_0,p_0,z_1,f)=0$, which has its positive puncture at $z_0$
  where $f$ is asymptotic to $\gamma_0$ and takes the tangent
  direction $p_0$ to the point $p_{\bar\gamma_0}$, and which has
  negative punctures at $z_1,\dots,z_k$ where $f$ is asymptotic to
  $\gamma_1,\dots,\gamma_k$, respectively;
\item $f$ is not a branched cover of an orbit cylinder.
\end{itemize}
Two tuples $(z_0,p_0,z_1,\dots,z_k,f)$ and
$(z_0',p_0',z_1',\dots,z_k',f')$ are equivalent if
they are related under the natural action of $\Aut(S^2)$.

The regularity result for the Cauchy-Riemann operator above implies
that, for generic paths $(J_\tau)_{\tau\in[0,1]}$ in $\JJ_{S^1}$, any
moduli space
$$
\MM_{[0,1]}=\cup_{\tau\in[0,1]}\MM(\gamma_0;\gamma_1,\dots,\gamma_k;J_\tau)
$$
is cut out transversally and is consequently a manifold of dimension
$$
   \dim(\MM_{[0,1]}) =
   |\gamma_0|-\sum_{j=1}^k|\gamma_j|+1.
$$
Here a manifold of negative dimension is understood to be empty.

In order to establish (R) it remains to study the boundary of a
$1$-dimensional moduli space $\MM_{[0,1]}$ as above. Let
$f_k\in\MM_{[0,1]}$ be a sequence as in Step 1 which converges to a
broken holomorphic sphere $F$ modelled on a tree $T$ with stem $S$.
Then each $\alpha\in S$ lies between the special vertices $\alpha_\pm$
corresponding to the free asymptotic orbits $\gamma_\pm$ (the
asymptotic orbits of $f_k$), so any good holomorphic sphere $F_\alpha$,
$\alpha\in S$, in $(X,J_\tau)$ belong to some moduli space
$\MM(\gamma_0;\gamma_1,\dots,\gamma_k;J_\tau)$ of the type above. Thus
the preceding discussion implies the following regularity properties
for $F_\alpha$. If $F_\alpha$ lies in $(\R\times Y,J_\pm)$ it belongs to a
moduli space $\MM(\gamma_0;\gamma_1,\dots,\gamma_k;J_\pm)$ which is
transversely cut out, so $\ind(F_\alpha)\geq 1$ by $\R$-invariance.
If $F_\alpha$ lies in $(X,J_\tau)$ for some $\tau$ it belongs to a
moduli space $\MM_{[0,1]}$ which is transversely cut out, so
$\ind(F_\alpha)\geq -1$. Moreover, $\ind(F_\alpha)=-1$ occurs only for
finitely many components $C_{\tau_j}$ at parameter values
$\tau_1,\dots,\tau_m$ and all other components have
$\ind(F_\alpha)\geq 0$. This proves that the regularity
condition (R) holds and hence demonstrates Theorem~\ref{thm:SFT}.
\end{proof}

\begin{remark}\label{rem:SFT}
Theorem~\ref{thm:SFT} defines linearized contact homology for
dynamically convex contact forms on $S^3$, which turn out to be
sufficient for the purposes of this paper, see Remark
\ref{r:sp=gen}. Linearized contact homology is expected to exist more
generally for any contact manifold $(Y,\lambda)$ with an exact
symplectic filling $(X,d\lambda)$. Here the boundary map between
closed Reeb orbits $\gamma_\pm$ will count punctured holomorphic
spheres in $\R\times Y$ with one positive puncture asymptotic to
$\gamma_+$ and any number of negative punctures asymptotic to Reeb
orbits $\gamma_-,\gamma_1,\dots,\gamma_k$, together with rigid
holomorphic planes in $X$ asymptotic to the $\gamma_i$,
$i=1,\dots,k$. See~\cite{CL,BEE} for details of this construction.
However, it seems that in this more general situation transversality
cannot be achieved by the method in Theorem~\ref{thm:SFT}, but
requires the use of abstract perturbations. Hofer, Wysocki and Zehnder
are developing a theory of such perturbations called ``polyfold
Fredholm theory''~\cite{HWZ-poly}. However, at the time of this
writing their theory is not yet completed.
\end{remark}

\section{Proof of Theorem~\ref{thm:two}}\label{sec:proof}

In this section we prove Theorem \ref{thm:two}. To that end we
study properties of Reeb orbits in dimension $3$ and dynamical
properties of translations on a flat torus.

\subsection{Conley-Zehnder indices}\label{ssec:CZ}
The grading in the contact homology algebra is induced by
Conley-Zehnder indices, see \cite{HWZ}, \cite{Lo}. Here we
recall some properties of the Conley-Zehnder index in dimension $3$ in
the case of non-degenerate Reeb orbits. Let $\gamma$ be a simple
closed Reeb orbit and denote by $\gamma^k$ its
$k$-th iterate. Then the Floquet multipliers (eigenvalues of the
linearized return map on a transverse section) occur in a pair
$\mu,1/\mu\in\R\setminus\{0,\pm 1\}$ if they are real, or
$\mu,\bar\mu\in S^1\setminus\{\pm 1\}$ if not. If $\gamma$ is a Reeb
orbit we write $\CZ(\gamma)$ for its Conley-Zehnder index. According
to Section 8.1 in~\cite{Lo}, we need to distinguish three cases (the
first case is covered by Theorem 7 and the other two by Theorem 6).

{\bf Elliptic case: }$\gamma$ has a non-real Floquet multiplier
$\mu=e^{2\pi i\alpha}$ with $\alpha\in(0,1)$ irrational. Then
$$
   \CZ(\gamma^k) = 2kr + 2[k\alpha] + 1= 2[k(r+\alpha)] + 1,
$$
for some integer $r\in\Z$, where $[x]$ denotes the largest integer
smaller than or equal to $x$.

{\bf Even hyperbolic case: }$\gamma$ has a positive real Floquet
multiplier $\mu\in(0,1)$. Then
$$
   \CZ(\gamma^k) = 2kr,
$$
for some integer $r\in\Z$.

{\bf Odd hyperbolic case: }$\gamma$ has a negative real Floquet
multiplier $\mu\in(-1,0)$. Then
$$
   \CZ(\gamma^k) = (2r+1)k,
$$
for some integer $r\in\Z$. Note that the even multiplies of $\gamma$
are ``bad'' in the sense of~\cite{EGH} and do not contribute to
contact homology.

As mentioned in Section \ref{sec:intro}, the {\em degree $|\gamma|$}
of a closed Reeb orbit $\gamma$ as a
generator of the contact homology algebra as well as a generator of
the linearized contact homology chain complex is given by
$$
   |\gamma| = \CZ(\gamma)-1.
$$
The following lemma was used in the proof of Theorem~\ref{thm:SFT}
above.

\begin{lemma}\label{lem:ind}
Let $\gamma$ be a simple closed Reeb orbit all of whose iterates are
nondegenerate. Then for any positive integers $k_1,\dots,k_s$
$$
   |\gamma^{k_1+\dots+k_s}|\geq|\gamma^{k_1}|+\dots+|\gamma^{k_s}|.
$$
\end{lemma}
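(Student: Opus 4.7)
The plan is to reduce the inequality to elementary arithmetic by splitting into cases according to the type of the Floquet multiplier of $\gamma$ and using the explicit formulas for $\CZ(\gamma^k)$ recalled in Section \ref{ssec:CZ}. Throughout, write $K = k_1 + \cdots + k_s$ and recall that $|\gamma^k| = \CZ(\gamma^k) - 1$. A single formula from Section \ref{ssec:CZ} applies uniformly to every iterate, since both the type (elliptic/even hyperbolic/odd hyperbolic) and the integer $r$ are determined by $\gamma$ together with a fixed trivialization of $\gamma^*\ker\lambda$.

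The elliptic case carries essentially all of the content. Setting $\beta = r + \alpha$, which is irrational and hence ensures nondegeneracy of all iterates, the formula reads $|\gamma^k| = 2[k\beta]$, and the inequality reduces to
\[
  [K\beta] \;\geq\; \sum_{i=1}^s [k_i\beta].
\]
This is the standard superadditivity of the floor function: since $K\beta = \sum_i k_i\beta \geq \sum_i [k_i\beta]$ and the right-hand side is an integer, monotonicity of $[\,\cdot\,]$ yields the claim.

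In the even hyperbolic case, $|\gamma^k| = 2kr - 1$, so
\[
  \sum_{i=1}^s |\gamma^{k_i}| \;=\; 2Kr - s \;\leq\; 2Kr - 1 \;=\; |\gamma^K|,
\]
where we used $s \geq 1$. The odd hyperbolic case is identical after replacing $2r$ by $2r+1$ and again reduces to $s \geq 1$. There is no genuine obstacle: the hyperbolic cases are immediate, and the elliptic case is a one-line appeal to superadditivity of $[\,\cdot\,]$. The only point requiring a moment of care is the bookkeeping that a single formula from Section \ref{ssec:CZ} applies uniformly to all iterates of $\gamma$, which is automatic once the trivialization is fixed.
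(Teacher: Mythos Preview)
Your proof is correct and follows essentially the same approach as the paper: case-by-case analysis according to the type of $\gamma$, with the elliptic case handled by superadditivity of the floor function and the hyperbolic cases reducing to $s\geq 1$. The only cosmetic difference is that you bundle $r+\alpha$ into a single $\beta$, whereas the paper writes out the $r$-term (which cancels) and the $\alpha$-term separately.
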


\begin{proof}
Set $k=k_1+\dots+k_s$. We treat each type of $\gamma$
separately. If $\gamma$ is elliptic, then
$$
   |\gamma^k|-\sum_{i=1}^s|\gamma^{k_i}|
   = 2r(k-\sum_{i=1}^sk_i) + 2([k\alpha]-\sum_{i=1}^s[k_i\alpha]) \geq
   0
$$
because $[a+b]\geq[a]+[b]$ for any real numbers $a,b$.
If $\gamma$ is even hyperbolic, then
$$
   |\gamma^k|-\sum_{i=1}^s|\gamma^{k_i}|
   = (2kr-1) - \sum_{i=1}^s(2k_ir-1) = s-1 \geq 0.
$$
Finally, if $\gamma$ is odd hyperbolic, then
$$
   |\gamma^k|-\sum_{i=1}^s|\gamma^{k_i}|
   = \bigl((2r+1)k-1\bigr) - \sum_{i=1}^s\bigl((2r+1)k_i-1\bigr) = s-1
   \geq 0.
$$
\end{proof}

\subsection{Torus dynamics}
Consider the $n$-dimensional torus $\T^n=\R^n/\Z^n$. Let
$(\xi_1,\dots,\xi_n)\in[0,1]^n$. Let $\tau\colon\T^n\to\T^n$ denote
translation by $(\xi_1,\dots,\xi_n)$.

\begin{lemma}\label{l:torus}
The orbit $\{\tau^m(0)\}_{m\in\Z}$ of $0\in\T^n$ under $\tau$ is dense
in a finite collection of translates of an $l$-torus $\T^l$ which is a
subgroup of $\T^n$, where
$$
l+1 = \dim_{\Q}\Bigl(\Spa_\Q(\xi_1,\dots,\xi_n,1)\Bigr).
$$
Here $\Spa_\Q(\xi_1,\dots,\xi_n,1)$ denotes the vector subspace of
$\R$ spanned by $\xi_1,\dots,\xi_n,1$, where $\R$ is viewed as a
vector space over $\Q$.
\end{lemma}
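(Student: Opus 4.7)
The plan is to identify the orbit closure as a closed subgroup of $\T^n$ and then read off its dimension via Pontryagin duality. Setting $\xi=(\xi_1,\dots,\xi_n)$, the orbit is the cyclic subgroup $\{m\xi:m\in\Z\}\subset\T^n$, and its closure $H$ is a closed subgroup. By the classical structure theorem for closed subgroups of a torus, the identity component $H_0$ is a subtorus of some dimension $l$, and $H$ is a disjoint union of finitely many cosets of $H_0$ (these cosets are open in $H$ because $H$ is compact Lie and hence has only finitely many connected components). Since the cyclic orbit is dense in $H$ and each coset is open in $H$, the orbit meets every coset and is therefore dense in each translate of $H_0\cong\T^l$. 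It remains only to pin down $l$.

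For this I would use the duality between closed subgroups of $\T^n$ and sublattices of $\Z^n$. The characters of $\T^n$ are $\chi_k\colon x\mapsto e^{2\pi i\la k,x\ra}$ for $k\in\Z^n$, and by continuity $\chi_k$ annihilates $H$ if and only if $\chi_k(\xi)=1$, i.e.\ $\sum_{j=1}^n k_j\xi_j\in\Z$. The annihilator of $H$ is therefore the lattice
$$
  \Lambda=\Bigl\{k\in\Z^n:\textstyle\sum_{j=1}^n k_j\xi_j\in\Z\Bigr\},
$$
and Pontryagin duality gives $\dim H=n-\operatorname{rank}\Lambda$.

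The final step is to compute $\operatorname{rank}\Lambda$. An element $k\in\Lambda$ is the same data as an integer tuple $(k_0,k_1,\dots,k_n)\in\Z^{n+1}$ satisfying the linear relation $k_0\cdot 1+\sum_{j=1}^n k_j\xi_j=0$ in $\R$; the map $(k_0,k)\mapsto k$ is a bijection onto $\Lambda$ because $k_0$ is uniquely determined by $k$. Hence $\operatorname{rank}\Lambda$ equals the rank of the lattice of integer linear relations among $1,\xi_1,\dots,\xi_n$, which by elementary linear algebra over $\Q$ is
$(n+1)-\dim_\Q\Spa_\Q(\xi_1,\dots,\xi_n,1)=n-l$.
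Combining with the previous paragraph gives $\dim H_0=l$, completing the proof. The only non-routine input here is the structure theorem for closed subgroups of $\T^n$ and Pontryagin duality; both are classical, so I do not anticipate a serious obstacle in carrying this out.
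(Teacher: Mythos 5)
Your proof is correct, but it takes a genuinely different route from the paper. The paper reduces to the continuous case by embedding the orbit in a geodesic on $\T^{n+1}$ (for $l=n$) and otherwise runs an explicit induction: it finds an integer relation $\sum m_j\xi_j+m_{n+1}=0$, clears a gcd so that the corresponding affine hyperplane $H\subset\R^n$ contains an integer point, observes that the iterates $\tau^{kd}(0)$ land in the subtorus cut out by $H$ while the others lie in its translates by $\tau^j(0)$, and then restarts the argument one dimension down, terminating after $n-l$ steps. You instead invoke the structure theorem for closed subgroups of $\T^n$ together with Pontryagin duality: the orbit closure $H$ is a closed subgroup with identity component a subtorus $\T^l$ and finitely many components, and its annihilator lattice $\Lambda=\{k\in\Z^n:\sum k_j\xi_j\in\Z\}$ satisfies $\dim H=n-\operatorname{rank}\Lambda$; the bijection $(k_0,k)\mapsto k$ identifies $\Lambda$ with the integer relation lattice among $1,\xi_1,\dots,\xi_n$, whose rank is $(n+1)-\dim_\Q\Spa_\Q(\xi_1,\dots,\xi_n,1)=n-l$ (the integer points of a $\Q$-subspace of $\Q^{n+1}$ form a full-rank lattice in it). Your approach buys conceptual cleanliness and avoids the induction entirely, at the cost of importing the classification of closed subgroups of tori and Pontryagin duality as black boxes; the paper's approach is more elementary and explicitly constructs the nested subtori, which keeps the argument self-contained given the well-known density fact for geodesic flows that it records in the remark.
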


\begin{remark}
This lemma is a discrete version of the well-known fact
that the geodesic through $0\in T^m$ in direction
$(\xi_1,\dots,\xi_m)$ is dense in a subtorus $T^k\subset T^m$, where
$$
   k = \dim_{\Q}\Bigl(\Spa_\Q(\xi_1,\dots,\xi_m)\Bigr).
$$
\end{remark}

\begin{proof}
Assume first that $l=n$. Then by the preceding remark the geodesic
$\gamma$ through $0\in T^{n+1}$ in direction $(\xi_1,\dots,\xi_n,1)$
is dense in
the torus $T^{n+1}$. Since $\gamma$ is transverse to the subtorus
$T^n$ where the last coordinate equals zero, the intersection
$\gamma\cap T^n$ is dense in $T^n$. But this intersection is just the
orbit $\{\tau^m(0)\}_{m\in\Z}$ of $0$ in $\T^n$.


Consider next the case when $l<n$. In this case there is an equation
of the form
$$
m_1\xi_1+m_2\xi_2+\dots+m_n\xi_n+ m_{n+1}=0,
$$
where all $m_j$ are integers. If $d$ is the greatest common divisor of
$m_1,\dots,m_n$ we may rewrite this as
$$
m_1'(d\xi_1)+\dots+m_n'(d\xi_n)+m_{n+1}=0.
$$
Since $m_1',\dots,m_n'$ do not have any common divisor, it follows
that the hyperplane $H$ in $\R^n$ given by the equation
$$
m_1'(dx_1)+\dots+m_n'(dx_n)+m_{n+1}=0
$$
contains a point with integer coordinates. It follows that any iterate
$\tau^{kd}(0)$ which is a multiple of $d$ lies in the torus
$\T^{n-1}\subset \T^n$ which is the subgroup with preimage in $\R^n$
given by the integer translates of the hyperplane $H$. Any other
iterate lies in a translate of this subgroup by $\tau^j(0)$,
$j=1,\dots, d-1$.

To finish the proof we use induction. The intersection of the lattice
$\Z^n$ and the hyperplane $H$ is again a lattice generated by vectors
with integer coordinates. Let $v_1,\dots,v_{n-1}$ be a basis. Writing
$$
(d\xi_1,\dots,d\xi_n)=\eta_1 v_1 +\dots+\eta_{n-1} v_{n-1},
$$
we find that the vector spaces over $\Q$ spanned by
$\eta_1,\dots,\eta_{n-1}$ and the vector space spanned by
$\xi_1,\dots,\xi_n$ are equal. Hence
$$
\dim_{\Q}\Bigl(\Spa_{\Q}(\eta_1,\dots,\eta_{n-1},1)\Bigr)=l.
$$
Using the argument above we can either confine the orbits to
translates of lower dimensional tori (if $l<n-1$) or the orbit is
dense (if $l=n-1$). The lemma follows after $n-l$ steps.
\end{proof}

\subsection{Jump sequences}\label{sec:jump}

To each real number $\xi\in(0,1)$ we associate its {\em jump sequence}
$j(\xi)=\{j_n(\xi)\}_{n\in\N}$ via
$$
   j_n(\xi) = [n/\xi],\qquad n\in\N.
$$
Thus $j_n=j_n(\xi)$ is the unique integer satisfying
$$
   j_n\xi\leq n < (j_n+1)\xi,
$$
so the $n$-th jump in the sequence $\{[k\xi]\}_{k\in\N}$ occurs at
$k=j_n(\xi)$. The jump sequence $j(\xi)$ determines $\xi$ via
$$
   \lim_{n\to\infty}\frac{n}{j_n(\xi)}=\xi.
$$

\begin{lemma}\label{lem:jump}
For $i=1,2,3$ let $\xi_i\in(0,1)$ be irrational with jump sequences
$j(\xi_i)$.
\begin{itemize}
\item[{\rm (a)}] If $j(\xi_2)$ is a subsequence of $j(\xi_1)$, then there exist a
linear relation
$$
   \xi_2=p\xi_1+q,\qquad p,q\in\Q,\quad p>0.
$$
\item[{\rm (b)}] If $j(\xi_2)$ and $j(\xi_3)$ are both subsequences of $j(\xi_1)$,
then $\xi_2$ and $\xi_3$ have a common jump, i.e.~there exist
$m_2,m_3\in\N$ such that $j_{m_2}(\xi_2)=j_{m_3}(\xi_3)$.
\end{itemize}
\end{lemma}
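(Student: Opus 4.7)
The plan for both parts is to convert the statements into dynamical questions on tori and apply Lemma~\ref{l:torus}.

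For (a), the starting point is the characterization: for irrational $\xi\in(0,1)$ and $k\ge 1$, $k\in j(\xi)$ if and only if $\{k\xi\}>1-\xi$, because $k=[n/\xi]$ for some $n$ is equivalent to the length-$\xi$ interval $[k\xi,(k+1)\xi)$ containing an integer. Thus $j(\xi_2)\subseteq j(\xi_1)$ translates into the condition that the orbit $\{k(\xi_1,\xi_2)\bmod 1\}_{k\ge 1}\subset\T^2$ avoids the open rectangle $F=[0,1-\xi_1]\times(1-\xi_2,1)$ of positive Lebesgue measure. Lemma~\ref{l:torus} (with $n=2$) then forces $\dim_\Q\Spa_\Q(\xi_1,\xi_2,1)\le 2$, producing integers $(a,b,c)\ne(0,0,0)$ with $a\xi_1+b\xi_2+c=0$. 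Irrationality of $\xi_1,\xi_2$ gives $b\ne 0$ and $a\ne 0$, so $\xi_2=p\xi_1+q$ with $p:=-a/b\in\Q\setminus\{0\}$ and $q\in\Q$. To exclude $p<0$, I would use that the identity component $G_0$ of the orbit closure is then a closed curve of negative slope through $0\in\T^2$; its local parametrisation $t\mapsto(t,1-|p|t)$ enters $F$ for all $t\in(0,\min(1-\xi_1,\xi_2/|p|))$, and density of the orbit in the closure places an orbit point in $F$, contradicting the avoidance.

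For (b), applying (a) to each subsequence relation gives $\xi_2=p_2\xi_1+q_2$ and $\xi_3=p_3\xi_1+q_3$ with $p_2,p_3\in\Q_{>0}$ and $q_2,q_3\in\Q$. A common jump is an integer $k\ge 1$ with $\{k\xi_2\}>1-\xi_2$ and $\{k\xi_3\}>1-\xi_3$; the avoidance inherited from (a) applied to each pair $(\xi_1,\xi_i)$ then also forces $\{k\xi_1\}>1-\xi_1$, so it suffices to produce $k$ with $k(\xi_1,\xi_2,\xi_3)\bmod 1\in C:=\prod_{i=1}^3(1-\xi_i,1)$. Since $\dim_\Q\Spa_\Q(\xi_1,\xi_2,\xi_3,1)=2$, Lemma~\ref{l:torus} presents the orbit closure as a finite union of translates of a $1$-dimensional subtorus $G_0\subset\T^3$ whose tangent direction at $0$ is $(1,p_2,p_3)$, a vector with strictly positive entries. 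For $t<0$ with $|t|<\min(\xi_1,\xi_2/p_2,\xi_3/p_3)$ the local parametrisation $t\mapsto(1+t,1+p_2 t,1+p_3 t)$ of $G_0$ lies inside $C$, and density of the orbit in the full closure supplies the desired $k$.

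The main obstacle will be the two geometric assertions---that the negative-slope curve on $\T^2$ meets $F$ in (a), and that the positive-direction subtorus of $\T^3$ meets $C$ in (b). Both come down to the explicit local parametrisations above; what requires care is tracking the shifts introduced by the rational constants $q,q_i$ and verifying that the orbit is dense in the relevant connected component of its closure (which uses that the identity component $G_0$ is open in $G$).
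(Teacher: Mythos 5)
Your proposal is correct and follows essentially the same route as the paper: recast the subsequence conditions as orbit-avoidance statements for the translation on $\T^2$ (resp.\ $\T^3$), invoke Lemma~\ref{l:torus} to get the rational relation, and then use density along the one-dimensional identity component of the orbit closure to exclude $p<0$ in (a) and to find a common jump in (b). The only cosmetic difference is that in (b) you work directly in $\T^3$ with direction $(1,p_2,p_3)$, whereas the paper first combines the two relations into $\xi_3=p\xi_2+q$ and returns to $\T^2$.
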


\begin{example}
The situation in (a) occurs e.g.~for $\xi_2=\xi_1/k$, $k\in\N$, in
which case $j_n(\xi_2)=j_{nk}(\xi_1)$. An example with $q\neq 0$ is
given by $\xi_2=\xi_1-\frac12$: If $k=j_n(\xi_2)$ is in the jump
sequence of $\xi_2$ then
$$
k\xi_1< n+\frac{k}{2}< (k+1)\xi_1 -\frac12.
$$
It follows that for $k$ even,
$$
k\xi_1< n+\frac{k}{2}< (k+1)\xi_1,
$$
and for $k$ odd,
$$
k\xi_1< n+\frac{k}{2}+\frac12< (k+1)\xi_1.
$$
Hence in either case $k$ is in the jump sequence of $\xi_1$.
\end{example}

\begin{remark}\label{rem:jump}
D.~Kotschick has pointed out that if $j(\xi_2)$ is a subsequence of
$j(\xi_1)$ and $\xi_1\leq 1/2$, then $\xi_1=k\xi_2$ for some
$k\in\N$. This can be seen as follows. Suppose
$j_n(\xi_2)=j_{m(n)}(\xi_1)$ for a sequence $m:\N\to\N$. It easily
follows that $m$ is a quasi-morphism with defect (deviation from being
a semi-group homomorphism) at most $[3\xi_1]\leq 1$. A more careful
estimate shows that for $\xi_1\leq 1/2$ the defect equals zero, so $m$
is a group homomorphism, i.e.~$m(n)=kn$ for some $k\in\N$.

As the preceding example shows, this result fails as soon as
$\xi_1>1/2$. It might be interesting to characterize all pairs
$(\xi_1,\xi_2)$ for which $j(\xi_2)$ is a subsequence of $j(\xi_1)$.
\end{remark}

\begin{proof}[Proof of Lemma~\ref{lem:jump}]
Consider (a). Let $\T^2=\R^2/\Z^2$ and let
$\tau\colon\T^2\to\T^2$ be translation by the vector
$(\xi_1,\xi_2)$. Lemma \ref{l:torus} implies
that if $\xi_1$, $\xi_2$ and $1$ are linearly independent
over $\Q$, then the orbit of $0$ under $\tau$ is dense in
$\T^2$. Thus there exists an iterate
$\tau^k(0)=(x_1,x_2)\in(0,1)^2$ such that $x_1+\xi_1<1$ and
$x_2+\xi_2>1$. So $k$ is a jump of $\xi_2$ but not of $\xi_1$,
contradicting the hypothesis. We conclude that $\xi_1$, $\xi_2$ and
$1$ satisfy a linear relation
$$
   \xi_2=p\xi_1+q,\qquad p,q\in\Q.
$$
The proof of Lemma~\ref{l:torus} shows that the orbit of $0$ is dense
in the straight line
through $0$ with slope $p$. If $p<0$, this implies that there is an
iterate $\tau^k(0)$ with representative
$(x_1,x_2)\in(-\frac12,\frac12)^2$ arbitrarily close to $0$ and such
that $x_1>0$, $x_2<0$. So $k$ is a jump of $\xi_2$ but not of $\xi_1$,
contradicting the hypothesis. Hence the slope $p$ is positive.

Consider (b). Part (a) implies that there are linear relations
$$
   \xi_j=p_j\xi_1+q_j,\qquad p_j,q_j\in\Q,\quad p_j>0,\quad j=2,3.
$$
It follows that there is a linear relation
$$
   \xi_3=p\xi_2+q,\qquad p,q\in\Q,\quad p>0.
$$
Again by the proof of Lemma~\ref{l:torus}, the orbit of $0$ under the
translation
$\tau:\T^2\to \T^2$ by $(\xi_2,\xi_3)$ is dense in the straight line
through $0$ with slope $p$. This implies that there is an iterate
$\tau^k(0)$ with representative $(x_1,x_2)\in(-\frac12,\frac12)^2$
arbitrarily close to $0$ and such that $x_1<0$, $x_2<0$. So $k$ is a
jump of both $\xi_2$ and $\xi_3$.
\end{proof}

\subsection{Proof of Theorem~\ref{thm:two}}
Assume that the differential in linearized contact homology
vanishes. Then in view of the discussion preceding
Theorem~\ref{thm:two}, there is a unique (not
necessarily simple) closed Reeb orbit of every positive even degree
and no other Reeb orbits. In particular, there are no
even hyperbolic orbits. We distinguish three cases.

{\bf Case 1: }There are at least two simple Reeb orbits which are odd
hyperbolic.

In this case, the formula for the Conley-Zehnder index shows that
suitable odd multiplies of the simple orbits have the same
degree. This contradicts vanishing differential. Hence, Case 1 is
ruled out.

{\bf Case 2: }There is exactly one simple Reeb orbit which is odd
hyperbolic.

Denote this simple orbit by $\gamma_1$. Write $\CZ(\gamma_1)=2r_1+1$.
Dynamical convexity implies $r_1\ge 1$. Odd multiples
$\gamma_1^{2\ell-1}$ of $\gamma_1$ have degrees
$|\gamma_1^{2\ell-1}|=(2r_1+1)(2\ell-1)-1$. Since
$\{(2r_1+1)(2\ell-1)-1\}_{\ell=1}^\infty$  does not contain all
positive even integers, there exists some other orbit of even
degree. Since $\gamma_1$ is assumed to be the only odd hyperbolic
orbit we conclude that there exists an elliptic orbit. Let $\gamma_2$
be a simple elliptic orbit. Write
$\CZ(\gamma_2)=2[r_2+\alpha_2]+1=2r_2+1$, where $r_2\in\Z$ and
$\alpha_2\in(0,1)$ is irrational. Dynamical convexity
implies $r_2\ge 1$. Multiples $\gamma_2^k$ of $\gamma_2$ have degrees
$2[k(r_2+\alpha_2)]$. We claim that there exists multiples $k$ and
$\ell$ such that
\begin{equation}\label{eq:irr1}
   (2r_1+1)(2\ell-1)-1 = 2[k(r_2+\alpha_2)].
\end{equation}
This claim implies that $|\gamma_1^{2\ell-1}|=|\gamma_2^k|$ which
contradicts vanishing differential and allows us to rule out Case 2.

To verify the claim we argue as follows. Let $v=2r_1+1$ and note that
the numbers in the left hand side of \eqref{eq:irr1} can be written as
$(\ell-1)(2v)+(v-1)$, $\ell=1,2,\dots$. Consider the circle
$\R/(v\cdot\Z)$ and the irrational rotation
$\tau\colon\R/(v\cdot\Z)\to\R/(v\cdot\Z)$,
$\tau(x)=x+(r_2+\alpha_2)$. Lemma \ref{l:torus} implies that there
exists $k$ such that
$$
\dist\left(\tau^k(0),\left(\frac{v-1}{2}+\frac14\right)\right)<10^{-80}.
$$
This means that there exists $\ell\ge 1$ such that
$$
\left|(\ell-1)v +
  \frac{v-1}{2}+\frac14-k(r_2+\alpha_2)\right|<10^{-80}.
$$
It follows that
$\left|((2r_1+1)(2\ell-1)-1+\frac12)-2k(r_2+\alpha_2)\right|<10^{-79}$.
This implies the claim.

{\bf Case 3: }All simple Reeb orbits are elliptic.

Let $\gamma_j$, $j=1,2,\dots$ denote the simple orbits. Write
$\CZ(\gamma_j)=2[r_j+\alpha_j]+1=2r_j+1$ where $r_j\in\Z$ and
$\alpha_j\in(0,1)$ is irrational and note that dynamical convexity
implies $r_j\ge 1$. Multiples $\gamma_j^k$ of $\gamma_j$ have degrees
$2[k(r_j+\alpha_j)]$. For fixed $j$ the set
$\bigl\{2[k(r_j+\alpha_j)]\bigr\}_{k=1}^\infty$ does not contain all
even multiples of $r_j$: For every multiplicity $k$ such that there
exists an integer $m$ with
$$
k\alpha_j< m< (k+1)\alpha_j,
$$
the equation
$$
|\gamma_j^{k+1}|-|\gamma_j^k| =
2[(k+1)(r_j+\alpha_j)]-2[k(r_j+\alpha_j)]=2(r_j+1)\geq 4
$$
holds. We conclude that there are at least two simple orbits.

Now we invoke a result proved in~\cite{BO}:
For all $k\in\N$ there exist isomorphisms $HC^\lin_{2k+2}(S^3)\cong
HC^\lin_{2k}(S^3)$ between linearized contact homology induced by
chain maps counting holomorphic curves in the symplectization. Since
the $d\lambda$-area of
a holomorphic curve is positive, this implies the following, in the
situation under consideration:
\begin{itemize}
\item[(O)]
The two orderings of the set $\{\gamma_j^k\}$, $j,k=1,2,\dots$ of all
closed Reeb orbits by increasing degree and by increasing action,
respectively, coincide.
\end{itemize}

If $\AA_j$ denotes the action of the simple Reeb orbit $\gamma_j$, then
the action of $\gamma_j^k$ equals $k\AA_j$. It follows from (O) that
each action ratio $\frac{\AA_i}{\AA_j}$, $i\ne j$, must be irrational:
if it were rational some multiples of $\gamma_i$ and $\gamma_j$ would
have the same action and hence the same degree, which is impossible
since the differential vanishes.

Let $\gamma_1$ denote the simple orbit of smallest action. It follows
from (O) and the fact that the minimal degree is $2$ that
$|\gamma_1|=2[r_1+\alpha_1]=2r_1=2$. Hence $r_1=1$ and the degrees
of adjacent multiples of $\gamma_1$ differ either by $2$ or by
$4$. Consider a simple orbit $\gamma_j$, $j\ne 1$. Since
$\frac{\AA_1}{\AA_j}\in(0,1)$ is irrational, for any integer $m$ there
is a unique integer $k$ such that $k\AA_1<m\AA_j<(k+1)\AA_1$, or
equivalently
\begin{equation}\label{e:betwj}
k\frac{\AA_1}{\AA_j}< m < (k+1)\frac{\AA_1}{\AA_j}.
\end{equation}
In particular, (O) implies that $|\gamma_1^{k+1}|-|\gamma_1^k|=4$, or
equivalently
\begin{equation}\label{e:betw1}
k\alpha_1< m'<(k+1)\alpha_1,
\end{equation}
for some integer $m'$. In the language of Section~\ref{sec:jump}, this
means:
\begin{itemize}
\item[(J)]
For each $j>1$ the jump sequence of $\xi_j=\frac{\AA_1}{\AA_j}$ is a
subsequence of the jump sequence of $\xi_1=\alpha_1$.
\end{itemize}

Now we prove that there are at most two simple orbits. To see this,
assume that there are three distinct orbits $\gamma_j$,
$j=1,2,3$. Condition (J) and Lemma~\ref{lem:jump} (b) imply that
$\xi_j=\frac{\AA_1}{\AA_j}$, $j=2,3$, have a common jump, i.e.~that there
exist integers $k$, $m_2$, and $m_3$ such that
\begin{align*}
&k\frac{\AA_1}{\AA_2}< m_2<(k+1)\frac{\AA_1}{\AA_2},
&k\frac{\AA_1}{\AA_3}< m_3<(k+1)\frac{\AA_1}{\AA_3}.
\end{align*}
This implies that both $m_2\AA_2$ and $m_3\AA_3$ lie between
$k\AA_1$ and $(k+1)\AA_1$. Since $|\gamma_1^{k+1}|-|\gamma_1^k|\leq
4$, it follows from (O) that $|\gamma_2^{m_2}|=|\gamma_3^{m_3}|$,
which contradicts vanishing differential. Hence there are at most two
simple Reeb orbits. Since we already know that there are at least
two orbits, this proves that (i) implies (ii).

For the converse implication suppose that the differential in
linearized contact homology does not vanish. This can only happen if
there is at least one even hyperbolic orbit $\gamma_1$ (whose iterates
have odd degree) and one elliptic or odd hyperbolic orbit $\gamma_2$
(whose good iterates have even degree). Dynamical convexity implies $r_2>0$ and the iteration
formulae for Conley-Zehnder indices show that multiples of $\gamma_2$
cannot attain all even degrees, so there must be a third orbit. Thus (ii) implies (i).

Next suppose that (i) and (ii) hold, so there are precisely two simple
closed orbits $\gamma_1,\gamma_2$ with actions $\AA_i$, Floquet
multipliers $\alpha_i$ and rotation numbers $r_i$. We first show that
these data are realized by a suitable irrational ellipsoid.

Since there are only two simple closed orbits, condition (O)
implies
that the jump sequences of
$\xi_1=\alpha_1$ and $\xi_2=\frac{\AA_1}{\AA_2}$ agree. Since the
$\xi_i$ are determined by their jump sequences, it follows that
\begin{equation}\label{eq:a1}
   \alpha_1=\frac{\AA_1}{\AA_2}.
\end{equation}
Next note that (O) implies that for each $m$, if $k$ is such that
$$
k\AA_1< m\AA_2<(k+1)\AA_1,
$$
then, since $r_1 = 1$,
$$
[k(1+\alpha_1)]< [m(r_2+\alpha_2)]<[(k+1)(1+\alpha_1)].
$$
These inequalities combine to
$$
\frac{[k(1+\alpha_1)]}{k+1} < \frac{[m(r_2+\alpha_2)]\AA_1}{m\AA_2}
< \frac{[(k+1)(1+\alpha_1)]}{k},
$$
which in the limit $m\to\infty$ (hence $k\to\infty$) yields
\begin{equation}\label{eq:a2}
\frac{\AA_2}{\AA_1}=\frac{r_2+\alpha_2}{1+\alpha_1}.
\end{equation}
Next note that the first gap in the degrees of multiples of
$\gamma_1$ occurs at the degree $2([1/\alpha_1]+1)$. So this must be
the degree of $\gamma_2$ and we conclude
\begin{equation}\label{eq:r}
   r_2=[1/\alpha_1]+1.
\end{equation}
Equations~\eqref{eq:a1}, \eqref{eq:a2} and~\eqref{eq:r} together with
$r_1=1$ uniquely determine $\alpha_1$, $\alpha_2$, $r_1$ and $r_2$ in
terms of the action ratio $\AA_2/\AA_1$:
\begin{equation}\label{eq:ra}
   \alpha_1=\frac{\AA_1}{\AA_2},\qquad
   \alpha_2 = \frac{\AA_2}{\AA_1} -
   \left[\frac{\AA_2}{\AA_1}\right],\qquad
   r_1=1,\qquad r_2=\left[\frac{\AA_2}{\AA_1}\right]+1.
\end{equation}

It remains to show that any combination of actions $\AA_i$, Floquet
multipliers $\alpha_i$ and rotation numbers $r_i$ satisfying
equations~\eqref{eq:ra} is realized by an ellipsoid
$$
   E = \left\{(z_1,z_2)\in\C^2\left|\, \frac{|z_1|^2}{a_1} +
   \frac{|z_1|^2}{a_1} = 1\right.\right\}
$$
with suitable $0<a_1<a_2$ such that $a_1/a_2$ is irrational.

The Reeb flow on $E$ is given by $z_i(t)=e^{2it/a_i}$, which is
periodic of period $T_i=\pi a_i$ in the $i$-th component. Thus for
$a_1/a_2$ irrational there are precisely two simple closed orbits
$\gamma_1=\{z_2=0\}$ and $\gamma_2=\{z_1=0\}$ of period (= action)
$$
   \AA_i = T_i = \pi a_i.
$$
Define the $a_i$ by this equation. From
$$
   z_2(T_1)=e^{2\pi ia_1/a_2},\qquad z_1(T_2)=e^{2\pi ia_2/a_1}
$$
we read off the Floquet multipliers
$$
   \alpha_1=a_1/a_2,\qquad \alpha_2=a_2/a_1-[a_2/a_1]
$$
and the rotation numbers
$$
   r_1=1,\qquad r_2=[a_2/a_1]+1
$$
(the additional $+1$ result from the choice of trivializations that
extend over disks). For $a_i=\AA_i/\pi$ these equations agree with
equations~\eqref{eq:ra}.

It is proved in~\cite{HWZ} that for dynamically convex contact forms there exists a
simple closed orbit $\gamma_1$ with $\CZ(\gamma_1)=3$ (unique in our
case) which is the binding of an open book decomposition. The pages
are discs whose interiors are transverse to the Reeb vector field.
The return map of a page possesses at least one fixed point (see~\cite{HWZ}),
which in our case must correspond to the second simple closed orbit
$\gamma_2$. Hence $\gamma_1$ and $\gamma_2$ are both
unknotted and have linking number $1$, which concludes the proof of
Theorem~\ref{thm:two}.\qed

\begin{remark}\label{r:dynconv}
In this remark we assume abstract perturbations exist so that
the linearized contact homology is well defined for all tight
contact forms on $S^{3}$ with nondegenerate closed Reeb orbits. As
mentioned in Remark
\ref{r:sp=gen}, it is immediate from the formulas for Conley-Zehnder
indices together with the computation of the linearized contact
homology of a tight contact form on $S^{3}$ that condition (i) in
Theorem \ref{thm:two} implies dynamical convexity. We show that (ii)
implies (i) and hence (ii) implies dynamical convexity as well.

Suppose that the differential in linearized contact homology does not
vanish. This can only happen if
there is at least one even hyperbolic orbit $\gamma_1$ (whose iterates
have odd degree) and one elliptic or odd hyperbolic orbit $\gamma_2$
(whose good iterates have even degree). If $r_2\ne 0$ then we conclude
that (i) holds exactly as in the proof above: the iteration formulas
for Conley-Zehnder indices show that multiples of $\gamma_2$
cannot attain all even degrees, so there must be a third orbit. If
$r_2 = 0$ then $|\gamma_2| = 0$, so we must have $|\gamma_1| = \pm
1$.
If $|\gamma_1|=-1$, then $|\gamma_1^k|=-1$ for all $k \ge 1$, so that
infinitely many orbits are required to eliminate these generators in
homology. If $|\gamma_1| = 1$, then $|\gamma_1^k| = 2k-1$ for all
$k \ge 1$, so that in order to eliminate these generators in
homology and span exactly $HC^\lin(S^3)$ in degrees $2$ to $2k-2$,
we must have $|\gamma_2^{2k-2}| < 2k-1 < |\gamma_2^{2k}|$ for all $k > 1$.
This implies that $\gamma_2$ is elliptic and that
$4 \alpha_2 = \lim_{k\to\infty} \frac{|\gamma_2^{2k}|}k = 2$,
a contradiction. This proves that (ii) implies (i).
\end{remark}



\begin{thebibliography}{999}

\bibitem{BEE} F.~Bourgeois, T.~Ekholm, Y.~Eliashberg, {\em A surgery
    exact sequence for linearized contact homology}, in preparation.

\bibitem{BEHWZ} F.~Bourgeois, Y.~Eliashberg, H.~Hofer, K.~Wysocki
and E.~Zehnder, {\em Compactness results in Symplectic Field
Theory}, Geom.~and Top.~{\bf 7}, 799--888 (2003).

\bibitem{BM} F.~Bourgeois and K.~Mohnke, {\em Coherent orientations in
    symplectic field theory}, Math.~Z.~{\bf 248}, 123--146 (2004).

\bibitem{BO} F.~Bourgeois and A.~Oancea, {\em An exact sequence for
    contact and symplectic homology}, arXiv:math.SG/0704.2169.

\bibitem{CL} K.~Cieliebak and J.~Latschev, {\em Symplectic field
    theory and string topology: the exact case}, in preparation.

\bibitem{CM-comp} K.~Cieliebak and K.~Mohnke, {\em Compactness for
punctured holomorphic curves}, J.~Symp.~Geom.~{\bf 3}, no.~4, 1--65
(2006).

\bibitem{CM-trans} K.~Cieliebak and K.~Mohnke, {\em Symplectic
    hypersurfaces and transversality in Gromov-Witten theory},
    arXiv:math/0702887.

\bibitem{El} Y.~Eliashberg, {\em Contact 3-manifolds twenty years since
  J.~Martinet's work}, Ann.~Inst.~Fourier (Grenoble) {\bf 42},
  165--192 (1992).

\bibitem{EGH} Y.~Eliashberg, A.~Givental and H.~Hofer, {\em Introduction to
    symplectic field theory}, GAFA 2000 Visions in Mathematics special volume,
    part II, 560--673.

\bibitem{FK} B.~Fayad and A.~Katok, {\em Constructions in elliptic
   dynamics}, Ergodic Theory Dynam.~Syst.~{\bf 24}, no.~5,
   1477--1520 (2004).

\bibitem{HLW} X.~Hu, Y.~Long and W.~Wang, {\em Resonance identity,
    stability and multiplicity of closed characteristics on compact
    convex hypersurfaces}, arXiv:math.SG/0701608.

\bibitem{Lo} Y.~Long, {\em Index Theory for Symplectic Paths with
    Applications}, Birkh\"auser (2002).

\bibitem{HWZ} H.~Hofer, K.~Wysocki and E.~Zehnder, {\em The dynamics
   on three-dimensional strictly convex energy surfaces}, Ann.~of
   Math.~{\bf 148}, no.~1, 197--289 (1998).

\bibitem{HWZ-poly} H.~Hofer, K.~Wysocki and E.~Zehnder, {\em A General
    Fredholm Theory I-II}, arXiv:math/0612604 and arXiv:0705.1310.

\bibitem{Sa} D.~Salamon, {\em Lectures on Floer homology}, Symplectic
  geometry and topology (Park City 1997), 143--229, IAS/Park City
  Math.~Ser.~7, Amer.~Math.~Soc.~(1999).

\end{thebibliography}
\end{document}